\DeclareFontFamily{U}{mathx}{\hyphenchar\font45}
\DeclareFontShape{U}{mathx}{m}{n}{
      <5> <6> <7> <8> <9> <10>
      <10.95> <12> <14.4> <17.28> <20.74> <24.88>
      mathx10
      }{}
\DeclareSymbolFont{mathx}{U}{mathx}{m}{n}
\DeclareMathAccent{\widecheck}{0}{mathx}{"71}
\DeclareMathAccent{\wideparen}{0}{mathx}{"75}
\theoremstyle{plain}
\newtheorem{thm}{Theorem}
\newtheorem{lm}[thm]{Lemma}
\theoremstyle{remark}
\theoremstyle{definition}
\newcommand{\R}{\mathbb R}
\newcommand{\Z}{\mathbb Z}
\newcommand{\N}{\mathbb N}
\newcommand{\bnu}{\begin{enumerate}}
\newcommand{\enu}{\end{enumerate}}
\newcommand{\al}{\alpha}
\newcommand{\nf}{\infty}
\newcommand{\wh}{\widehat}
\newcommand{\mc}{\mathcal}
\newcounter{question}
\newcommand{\bpf}{\begin{proof}}
\newcommand{\epf}{\end{proof}}
\begin{document}

\author{Lenka Slav\'ikov\'a}

\address{Department of Mathematics, University of Missouri, Columbia MO 65211, USA}
\email{slavikoval@missouri.edu}

\subjclass[2010]{Primary 42B15. Secondary 46E35}
\keywords{H\"ormander multiplier theorem, Sobolev space}

\title[The H\"ormander multiplier theorem in a limiting case]{On the failure of the H\"ormander multiplier theorem in a limiting case}
\date{}

\begin{abstract}
We discuss the H\"ormander multiplier theorem for $L^p$ boundedness of Fourier multipliers in which the multiplier belongs to a fractional Sobolev space with smoothness $s$. We show that this theorem does not hold in the limiting case $|1/p-1/2|=s/n$.
\end{abstract}

\maketitle

\section{Introduction}

Let $m$ be a bounded function on $\R^n$. We define the associated linear operator
$$
T_m(f)(x) = \int_{\R^n} \wh{f}(\xi) m(\xi) e^{2\pi i x\cdot \xi}d\xi, \quad x\in \R^n,
$$
where $f$ is a Schwartz function on $\R^n$ and
$\wh{f}(\xi) = \int_{\R^n} f(x)   e^{-2\pi i x\cdot \xi}dx$ is the Fourier transform of $f$. The problem of characterizing the class of functions $m$ for which the operator $T_m$ admits a bounded extension from $L^p(\R^n)$ to itself for a given $p\in (1,\nf)$ is one of the principal questions in harmonic analysis. We say that $m$ is an $L^p$ Fourier multiplier if the above mentioned property is satisfied. 
While it is a straightforward consequence of Plancherel's identity that all bounded functions are $L^2$ Fourier multipliers, the structure of the set of $L^p$ Fourier multipliers for $p\neq 2$ turns out to be significantly more complicated. 

A classical theorem of Mikhlin~\cite{Mikhlin} asserts that if the condition
\begin{equation}\label{E:mikhlin}
|\partial^\alpha m(\xi)|\leq C_\alpha |\xi|^{-| \alpha|}, \qquad \xi\neq 0, 
\end{equation}
is satisfied for all { multi-indices} $\al$ with size $|\al | \le [n/2]+1$, then $T_m$ admits a bounded extension from
$L^p(\R^n)$ to itself for all $1<p<\nf$. A subsequent result by H\"ormander~\cite{Hoe} showed that the pointwise estimate~\eqref{E:mikhlin} can be replaced by a weaker Sobolev-type condition:
\begin{equation}\label{E:hormander}
\sup_{R>0} R^{-n+2|\alpha|} \int_{\{\xi \in \R^n: ~R<|\xi|<2R\}} |\partial^{\alpha} m(\xi)|^2\,d\xi <\infty.
\end{equation}
Although theorems of Mikhlin and H\"ormander admit a variety of applications, their substantial limitation stems from the fact that they can only be applied to functions which are $L^p$ Fourier multipliers for all values of $p \in (1,\infty)$.
One can overcome this difficulty using an interpolation argument as in Calder\'on and Torchinsky~\cite{CT} or Connett and Schwartz~\cite{CS1, CS}; the conclusion is, roughly speaking, that \textit{the closer $p$ is to $2$, the fewer derivatives are needed in conditions~\eqref{E:mikhlin} or~\eqref{E:hormander}}. 

To be able to formulate things precisely, let us now recall the notion of fractional Sobolev spaces. For $s>0$ we denote by $(I-\Delta)^{\frac{s}{2}}$ the operator given on the Fourier transform side by multiplication by $(1+4\pi^2 |\xi|^2)^{\frac{s}{2}}$. If $1<r<\infty$ then the norm in the fractional Sobolev space $L^r_s$ is defined by
$$
\|f\|_{L^r_s}=\|(I-\Delta)^{\frac{s}{2}}f\|_{L^r}.
$$

The version of the Mikhlin-H\"ormander multiplier theorem due to Calder\'on and Torchinsky~\cite[Theorem 4.7]{CT} says that inequality
\begin{equation}\label{E:quantitative_hormander}
\|T_mf\|_{L^p} \leq C \sup_{D\in \Z} \|\phi(\xi) m(2^D \xi)\|_{L^r_s} \|f\|_{L^p}
\end{equation}
holds provided that 
\begin{equation}\label{E:indices}
\left|\frac{1}{p}-\frac{1}{2}\right|=\frac{1}{r}<\frac{s}{n}.
\end{equation}
Here, $\phi$ stands for a smooth function on $\R^n$ supported in the set $\{\xi\in \R^n: 1/2<|\xi|<2\}$ and satisfying $\sum_{D\in \Z} \phi(2^D \cdot)=1$. Additionally, it was pointed out in~\cite{GraHeHonNg} that the equality $|1/p-1/2|=1/r$ is not essential for~\eqref{E:quantitative_hormander} to be true, and~\eqref{E:indices} can thus be replaced by the couple of inequalities
\begin{equation}\label{E:indices_new}
\left|\frac{1}{p}-\frac{1}{2}\right|<\frac{s}{n}, \quad \frac{1}{r}<\frac{s}{n}.
\end{equation}

Let us notice that the latter inequality in~\eqref{E:indices_new} is dictated by the embedding of $L^r_s$ into the space of essentially bounded functions. Related to this we also mention
that the Sobolev-type condition in~\eqref{E:quantitative_hormander} can be further weakened by replacing the Sobolev space $L^r_s$ with $r>n/s$ by the Sobolev space with smoothness $s$ built upon the Lorentz space $L^{n/s,1}$, see~\cite{GS}.

Let us now discuss the sharpness of the first condition in~\eqref{E:indices_new}. It is well known that if inequality~\eqref{E:quantitative_hormander} holds, then we necessarily have $|1/p-1/2| \leq s/n$, see~\cite{hirschman}, \cite{W}, \cite{Miy}, \cite{MT} and~\cite{GraHeHonNg}. On the critical line $|1/p-1/2|=s/n$ there are positive endpoint results by Seeger~\cite{Seeger1, Seeger2, Seeger3}. In particular, it is shown in~\cite{Seeger3} that inequality~\eqref{E:quantitative_hormander} holds when $|1/p-1/2|=s/n$ and $r>n/s$ if the Sobolev space $L^r_s$ is replaced by the Besov space $B^s_{1,r}$, defined by
$$
\|f\|_{B^s_{1,r}}=\sum_{k=0}^\infty 2^{ks} \|(\varphi_k \wh{f}) \widecheck{~}\|_{L^r}.
$$
Here, $\varphi_0$ stands for a Schwartz function on $\R^n$ such that $\varphi_0(x)=1$ if $|x|\leq 1$ and $\varphi_0(x)=0$ if $|x|\geq 3/2$, and $\varphi_k(x)=\varphi_0(2^{-k}x)-\varphi_0(2^{1-k}x)$ for $k\in \N$. We recall that $B^s_{1,r}$ is embedded into $L^r_s$, thanks to the equivalence
$$
\|f\|_{L^r_s} \approx \left\|\left(\sum_{k=0}^\infty 2^{2ks} |(\varphi_k \wh{f}) \widecheck{~}|^2\right)^{\frac{1}{2}}\right\|_{L^r}
$$
and to embeddings between sequence spaces. 

In this note we show that H\"ormander's condition involving the Sobolev space $L^r_s$ fails to guarantee $L^p$ boundedness of $T_m$ in the limiting case 
$|1/p-1/2|=s/n$.
Our result has the following form.

\begin{thm}\label{T:main_theorem}
Let $1<p<\infty$, $p\neq 2$, and let $s>0$ be such that 
\begin{equation}\label{E:assumption_s}
\left|\frac{1}{p}-\frac{1}{2}\right|=\frac{s}{n}.
\end{equation}
Assume that $\phi$ is a smooth function on $\R^n$ supported in the set $\{\xi\in \R^n: 1/2<|\xi|<2\}$ and $r>1$ is a real number.
Then there is no constant $C$ such that inequality
\begin{equation}\label{E:not_true}
\|T_mf\|_{L^p} \leq C \sup_{D\in \Z} \|\phi(\xi) m(2^D \xi)\|_{L^r_s} \|f\|_{L^p}
\end{equation}
holds for all $m$ and $f$.
\end{thm}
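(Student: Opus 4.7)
The plan is to construct, for each large parameter $R>0$, a multiplier $m_R$ and test function $f$ such that the ratio of the two sides of~\eqref{E:not_true} grows like $R^{1/2}$. By duality ($T_m^*=T_{\bar m}$) I may assume $1<p<2$, so $s=n(1/p-1/2)$. Fix a smooth bump $\chi$ supported in a small ball contained in $\{1/2<|\xi|<2\}$, chosen so that $\operatorname{supp}\chi$ contains no point of $\Z^n$; set $A_R:=\Z^n\cap B_R(0)$ (so $|A_R|\asymp R^n$) and define
$$m_R(\xi):=\chi(\xi)\sum_{x\in A_R}e^{2\pi i x\cdot\xi}.$$
Because $\operatorname{supp} m_R\subset\{1/2<|\xi|<2\}$, only $|D|\le 1$ contribute to the supremum on the right-hand side of~\eqref{E:not_true}, which is therefore comparable to $\|\phi m_R\|_{L^r_s}$.

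For the operator norm, take $f:=\wh{\chi}$ with $\chi$ real and even so that $\wh{f}=\chi$. A direct expansion gives
$$T_{m_R}f \;=\; \sum_{x\in A_R}\wh{\chi^2}(\,\cdot\,+x),$$
a superposition of $|A_R|\asymp R^n$ Schwartz translates of $\wh{\chi^2}$ sitting at the integer lattice points $-x$. Since $\wh{\chi^2}$ decays rapidly and the translates are separated by at least $1$, they are essentially disjoint in every $L^p$, so $\|T_{m_R}f\|_{L^p}\gtrsim R^{n/p}$ while $\|f\|_{L^p}$ is a constant.

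For the Sobolev norm, use $\wh{\phi m_R}(y)=\sum_{x\in A_R}\wh{\chi}(y-x)$ and compute, on the Fourier side, that multiplication by $(1+4\pi^2|y|^2)^{s/2}$ followed by inversion yields, up to lower-order terms,
$$(1-\Delta)^{s/2}[\phi m_R](\xi)\;\approx\;(2\pi)^s\chi(\xi)\sum_{x\in A_R}|x|^s e^{2\pi i x\cdot\xi}.$$
Poisson summation rewrites the inner sum as $\sum_{\nu\in\Z^n}\wh{|x|^s\mathbf{1}_{B_R}}(\xi-\nu)$, and the Bessel asymptotic
$$\bigl|\wh{|x|^s\mathbf{1}_{B_R}}(\eta)\bigr|\;\lesssim\; R^{s+(n-1)/2}|\eta|^{-(n+1)/2}\qquad(R|\eta|\gg 1),$$
combined with oscillatory cancellation between the replicas on the lattice-free support of $\chi$, leads to $\|\phi m_R\|_{L^r_s}\lesssim R^{s+(n-1)/2}$ uniformly in $r\in(1,\infty]$. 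Combining with the operator bound and using $s=n(1/p-1/2)$ (so $n/p-s=n/2$), the ratio satisfies
$$\frac{\|T_{m_R}f\|_{L^p}}{\|f\|_{L^p}\sup_D\|\phi(\xi)m_R(2^D\xi)\|_{L^r_s}}\;\gtrsim\;\frac{R^{n/p}}{R^{s+(n-1)/2}}\;=\;R^{1/2},$$
which diverges as $R\to\infty$ and rules out any finite $C$ in~\eqref{E:not_true}.

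The main obstacle is the Sobolev upper bound. A naive Plancherel estimate gives $\|(1-\Delta)^{s/2}\phi m_R\|_{L^2}^2\lesssim\sum_{x\in A_R}|x|^{2s}\asymp R^{n+2s}$, i.e.\ $R^{s+n/2}$---too large by the very factor $R^{1/2}$ we are trying to produce. The sharpening to $R^{s+(n-1)/2}$ uses the Poisson-summation identity $\sum_{v\in\Z^n}\wh{\chi^2}(v)=0$, which holds precisely because $\operatorname{supp}\chi$ avoids $\Z^n$; this identity collapses the diagonal against the off-diagonal cross-terms in the Plancherel expansion, leaving only a boundary-layer contribution of order $R^{n-1+2s}$ coming from $x\in A_R$ near $\partial B_R$. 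Extending the estimate from $r=2$ to all $r>1$ (particularly $r>2$) requires an $L^\infty$-type bound on the Poisson sum, obtained by exploiting oscillatory cancellation among the Bessel asymptotics of the replicas $\wh{|x|^s\mathbf{1}_{B_R}}(\xi-\nu)$ uniformly in $\xi\in\operatorname{supp}\chi$; this is the technical heart of the argument.
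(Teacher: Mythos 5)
Your construction is genuinely different from the paper's --- the paper uses a \emph{randomized} multiplier (disjointly supported bumps $\Psi(2^N\xi-k)$ with Rademacher signs, so that Khintchine's inequality yields the $L^p$ lower bound as a square function and the Sobolev norm is controlled by purely size-based estimates) --- but your argument has a genuine gap at exactly the point you call ``the technical heart''. The bound $\|\phi m_R\|_{L^r_s}\lesssim R^{s+(n-1)/2}$ is asserted, not proved, and for large $r$ it is not within reach of the tools you invoke. After Poisson summation it amounts to an $L^r(\operatorname{supp}\chi)$ bound of order $R^{(n-1)/2}$ for the spherical Dirichlet kernel $D_R(\xi)=\sum_{x\in\Z^n\cap B_R}e^{2\pi i x\cdot\xi}$ away from $\Z^n$. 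For $r\le 2$ this is plausibly provable by the cancellation identity you mention (the pair-correlation count satisfies $r_R(0)-r_R(v)\lesssim |v|R^{n-1}$, which kills the diagonal). But the theorem must be disproved for \emph{every} $r>1$, in particular arbitrarily large $r$, and there your estimate becomes an essentially pointwise bound on $D_R$. The naive Poisson sum $\sum_{\nu}|\wh{\mathbf{1}_{B_R}}(\xi-\nu)|$ diverges (the Bessel decay $|\eta|^{-(n+1)/2}$ is not summable over $\Z^n$), so the ``oscillatory cancellation among the replicas'' must actually be established, and this is a hard lattice-point problem: elementary van der Corput--type arguments give only $|D_R(\xi)|\lesssim_{\xi} R^{n(n-1)/(n+1)}$, while the bound $O_\epsilon(R^{(n-1)/2+\epsilon})$ sits at the level of the Gauss circle problem (for $n=2$, $D_R(1/2,1/2)$ is a difference of circle-problem error terms). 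Since your entire gain over the trivial estimate is the factor $R^{1/2}$, there is no slack: interpolating the elementary $L^\infty$ bound against the $L^2$ bound stays below the required threshold $R^{n/2}$ only for $n\le 2$, and for $n\ge 3$ and $r$ large the argument collapses. (A smaller unaddressed point: $(I-\Delta)^{s/2}$ does not act on $\chi(\xi)e^{2\pi i x\cdot\xi}$ exactly by the factor $(1+4\pi^2|x|^2)^{s/2}$, and the commutator corrections must be summed over the $\asymp R^n$ lattice points.)

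By contrast, the paper's construction is engineered so that no exponential-sum cancellation is ever needed: the bumps have pairwise disjoint supports, so the Sobolev norm of each dyadic block is bounded by the non-oscillatory sum $\sum_k|\Phi(2^{N+D}\xi-k)|$ with $\Phi=(-\Delta)^{s/2}\Psi$ Schwartz, and the randomization replaces your lower bound on a coherent superposition by a square function. If you wish to salvage your deterministic lattice-sum multiplier, you would at minimum need to randomize the coefficients $e^{2\pi i x\cdot\xi}\mapsto a_x(t)e^{2\pi i x\cdot\xi}$ --- at which point you essentially recover the paper's scheme --- or restrict to $n\le 2$ and small $r$, which does not prove the theorem as stated.
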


We would like to point out that Theorem~\ref{T:main_theorem} is for simplicity formulated in terms of the spaces $L^r_s$, but its proof can be easily adapted to show the failure of H\"ormander's theorem on the critical line $|1/p-1/2|=s/n$ for a much larger family of Sobolev-type spaces. In particular, any Lorentz-Sobolev space $L^{r_1,r_2}_s$, defined as
$$
\|f\|_{L^{r_1,r_2}_s}=\|(I-\Delta)^{\frac{s}{2}} f\|_{L^{r_1,r_2}},
$$
where
$$
\|g\|_{L^{r_1,r_2}}=
\begin{cases}
\left(\int_0^\infty t^{\frac{r_2}{r_1}-1} (g^*(t))^{r_2}\,dt\right)^{\frac{1}{r_2}} & \textup{if } 1<r_1<\infty \textup{ and } 1\leq r_2<\infty\\
\sup_{t>0} t^{\frac{1}{r_1}} g^*(t) & \textup{if } 1<r_1<\infty \textup{ and } r_2=\infty,
\end{cases}
$$
can be included in our discussion. Here, 
$$
g^*(t)=\inf\{\lambda >0:~ |\{x\in \R^n: |g(x)|>\lambda\}|\leq t\}, \quad t>0,
$$
stands for the nonincreasing rearrangement of $g$.
Our results thus provide a negative answer to the open problem A.2 raised in Appendix A of the recent paper~\cite{SW}.

The proof of Theorem~\ref{T:main_theorem} uses the randomization technique in the spirit of~\cite[chapter 4]{Wolff}, which has been further developed in~\cite{GraHeHonNg} and~\cite{GHS}. 

\section{Proof of Theorem~\ref{T:main_theorem}}

Let $s>0$ and let $\Psi$ be a non-identically vanishing Schwartz function on $\R^n$ supported in the set $\{\xi \in \R^n:|\xi|< 1/2\}$. Then for any fixed integer $K$ and for any $t\in [0,1]$ we define
$$
m_t(\xi)=\sum_{N=1}^K c_N \sum_{k\in \N^n:~ N2^N <|k|<(N+1/2)2^N} a_{N,k}(t) \Psi(2^N\xi-k),
$$
where $a_{N,k}(t)$ denotes the sequence of Rademacher functions indexed by the elements of the countable set $\N \times \N^n$, and $c_N=2^{-Ns} N^{-s}$. 

\begin{lm}\label{L:lemma}
Let $1<r<\infty$ and let $\phi$ be as in Theorem~\ref{T:main_theorem}. Then
$$
\sup_{D\in \Z} \|\phi(\xi) m_t(2^D\xi)\|_{L^r_s}\leq C,
$$
with $C$ independent of $t$ and $K$. 
\end{lm}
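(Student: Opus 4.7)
My plan is to exploit the fact that, once $m_t(2^D\cdot)$ is localized by $\phi$, it decomposes into a family of smooth bumps with pairwise disjoint supports sitting at various dyadic scales, and then to combine the contributions in an $\ell^r$ (almost-orthogonal) fashion rather than by a naive triangle inequality.

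\textbf{Step 1 (Locating the active bumps).} Each term $\Psi(2^{N+D}\xi-k)$ of the rescaled expansion is supported in the ball $\{|\xi-k/2^{N+D}|<2^{-(N+D)-1}\}$. For this to meet $\textrm{supp}\,\phi\sset\{1/2<|\xi|<2\}$ one needs $|k/2^{N+D}|\ap 1$; combined with $|k|\in(N2^N,(N+1/2)2^N)$ this forces $N\ap 2^D$, and the number of admissible $k$'s is
$$
|S_{N,D}|\ap N^{n-1}\,2^{Nn}.
$$
Within a single $N$ the centres $k/2^{N+D}$ lie on the lattice $2^{-(N+D)}\Z^n$ while each bump has radius $2^{-(N+D)-1}$, so the active bumps are pairwise disjoint; across different $N$ the supports lie in disjoint thin spherical shells of radii $\ap N/2^D$ and thickness $\ap 2^{-D}$.

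\textbf{Step 2 (Single-bump estimate).} By translation-invariance and a Fourier-side scaling computation (the symbol $(1+4\pi^2|x|^2)^{s/2}$ multiplies $\wh\Psi(x/\la)$ with $\la=2^{N+D}$, and one then inverse-Fourier-transforms the resulting Schwartz function), one obtains
$$
\|\Psi(2^{N+D}\cdot-k)\|_{L^r_s}\ls 2^{(N+D)(s-n/r)}.
$$

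\textbf{Step 3 (Almost-orthogonal combination).} I would then establish the $\ell^r$-type bound
$$
\|\phi(\cdot)\,m_t(2^D\cdot)\|_{L^r_s}^r\ls\sum_{N\ap 2^D}c_N^r\,|S_{N,D}|\,2^{(N+D)(rs-n)}.
$$
This rests on the disjointness of supports from Step 1 together with the fact that $(I-\Delta)^{s/2}\Psi$ is itself Schwartz: since $\Psi$ is smooth and compactly supported, $\wh\Psi$ is Schwartz, so $(1+4\pi^2|x|^2)^{s/2}\wh\Psi(x)$ is Schwartz, and hence $(I-\Delta)^{s/2}\Psi$ is too. Consequently the Sobolev transform of each individual bump is concentrated on the bump, with rapidly decaying tails; summing these tails against the polynomially growing number of bumps contributes only a bounded cross-interaction.

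\textbf{Step 4 (Algebraic collapse).} Inserting $c_N=2^{-Ns}N^{-s}$ and the size of $|S_{N,D}|$, every exponent of $2^N$ in the right-hand side of Step 3 cancels, leaving
$$
\sum_{N\ap 2^D}c_N^r\,|S_{N,D}|\,2^{(N+D)(rs-n)}=2^{D(rs-n)}\sum_{N\ap 2^D}N^{n-1-sr}\ap 2^{D(rs-n)}\cdot 2^{D(n-sr)}=1,
$$
where $\sum_{N\in[A,2A]}N^{n-1-sr}\ap A^{n-sr}$ with $A\ap 2^D$ in every parameter regime (with only a logarithmic factor in the borderline case $sr=n$). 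Thus $\sup_{D\in\Z}\|\phi\,m_t(2^D\cdot)\|_{L^r_s}\ls 1$, with the constant independent of $t$ and $K$.

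\textbf{Main obstacle.} The crucial step is Step 3: a direct triangle inequality over $N$ would produce a factor $2^{D(1-1/r)}$ that diverges with $D$, so one must exploit the disjointness of supports across the $N$-shells together with the Schwartz decay of the Sobolev transforms of the individual bumps in order to obtain genuine $\ell^r$ (rather than $\ell^1$) summation.
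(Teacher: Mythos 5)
Your outline is sound and the numerology in Steps 2 and 4 checks out (in the borderline case $sr=n$ there is in fact no logarithmic loss, since the sum over $N$ runs over a single dyadic block $N\approx 2^D$). The route is close in spirit to the paper's, but the bookkeeping differs: the paper never counts the bumps. After separating $\phi$ from $m_t(2^D\cdot)$ by a Kato--Ponce (fractional Leibniz) inequality, it shows that $\chi_{A_D}(\xi)m_t(2^D\xi)$ and $(-\Delta)^{s/2}[\chi_{A_D}(\cdot)m_t(2^D\cdot)](\xi)$ are bounded pointwise by a constant --- using disjointness of the bumps, the key cancellation $|c_N|\,2^{(N+D)s}=2^{Ds}N^{-s}\lesssim 1$ for $N\approx 2^D$, and a comparison of the sum over $k$ with an integral over the shell --- together with a decay $|\xi|^{n-\alpha}$ for $|\xi|\ge 6$; the $L^r$ bound then follows from $L^\infty$ plus decay. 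Your $\ell^r$ almost-orthogonality accounting, which uses the cardinality $|S_{N,D}|\approx N^{n-1}2^{Nn}$, is a legitimate alternative and is essentially the dual computation to the lower bound appearing later in the proof of the theorem.

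The one genuine soft spot is Step 3, which you correctly identify as the crux but do not actually prove. The claim that the tails ``contribute only a bounded cross-interaction'' requires a quantitative input beyond disjointness and Schwartz decay of the individual transformed bumps: at a point $\xi$ in the $N'$-th shell, the tail of the level-$N$ contribution is a lattice sum $\sum_k(1+|2^{N+D}\xi-k|)^{-M}$ (after using $|c_N|2^{(N+D)s}\lesssim 1$), and if one only uses that each such lattice sum is $O(1)$, then summing over the $\approx 2^D$ values of $N$ in play produces a divergent factor $2^D$. What saves the argument is that for all but $O(1)$ values of $N$ the point $2^{N+D}\xi$ lies at distance $\gtrsim 2^N$ from the shell $\{N2^N<|k|<(N+1/2)2^N\}$, so those lattice sums are $\lesssim 2^{N(n-M)}$ and sum geometrically; this is precisely the paper's observation that $|z|>2^N$ for all but three values of $N$. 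You also need the analogous estimate for $\xi$ outside a fixed annulus to conclude that the cross-interaction term lies in $L^r$ (it is not compactly supported), and you should say how the multiplication by $\phi$ is handled (either by Kato--Ponce as in the paper, or by absorbing $\phi$ into the bumps, which is fine since $\phi$ is smooth at unit scale). With these points supplied, your argument closes.
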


\begin{proof}
Let us fix $D\in \Z$. We denote
$$
A_D=\left\{\xi \in \R^n: \frac{1}{2}-\frac{1}{4\cdot 2^D}<|\xi|<2+\frac{3}{4\cdot 2^D}\right\}. 
$$
Using the version of the Kato-Ponce inequality from~\cite{GO}, we get
\begin{align}\label{E:kato-ponce}
&\|\phi(\xi)m_t(2^D\xi)\|_{L^r_s}\\
\nonumber
&=\|(I-\Delta)^{\frac{s}{2}} [\phi(\xi) m_t(2^D\xi)]\|_{L^r}\\
\nonumber
&=\|(I-\Delta)^{\frac{s}{2}} [\phi(\xi) \chi_{A_D}(\xi)m_t(2^D\xi)]\|_{L^r}\\
\nonumber
&\lesssim \|(I-\Delta)^{\frac{s}{2}} [\phi(\xi)]\|_{L^{\infty}} \|\chi_{A_D}(\xi)m_t(2^D\xi)\|_{L^r}\\
\nonumber
&+\|\phi(\xi)\|_{L^{\infty}} \|(I-\Delta)^{\frac{s}{2}} [\chi_{A_D}(\xi)m_t(2^D\xi)]\|_{L^r}\\
\nonumber
&\lesssim \|\chi_{A_D}(\xi)m_t(2^D\xi)\|_{L^r} + \|(-\Delta)^{\frac{s}{2}} [\chi_{A_D}(\xi)m_t(2^D\xi)]\|_{L^r} ,
\end{align}
since $\phi$ is a Schwartz function.

For any integer $N\leq K$ and for any $t\in [0,1]$ we denote 
$$
F_{N,t}(\xi)=c_N \sum_{k\in \N^n:~ N2^N <|k|<(N+1/2)2^N} a_{N,k}(t) \Psi(2^N\xi-k)
$$
and observe that $F_{N,t}$ is supported in the set $\{\xi \in \R^n: N-1/4<|\xi|<N+3/4\}$. Indeed, let $\xi\in \R^n$ belong to the support of $F_{N,t}$. Then there is $k\in \N^n$ satisfying $N2^N <|k|<(N+1/2)2^N$ such that $|2^N \xi -k| \leq 1/2$. Thus,
$$
|\xi|\leq \left|\xi-\frac{k}{2^N}\right|+\left|\frac{k}{2^N}\right| <\frac{1}{2^{N+1}}+N+\frac{1}{2}
\leq N+\frac{3}{4}.
$$
Conversely,
$$
|\xi|\geq \left|\frac{k}{2^N}\right|-\left|\frac{k}{2^N}-\xi\right| > N-\frac{1}{2^{N+1}} \geq N-\frac{1}{4},
$$
which justifies the claim. 

We now observe that if $\xi\in A_D$, then $2^{D-1}-1/4<|2^D\xi|<2^{D+1}+3/4$, and therefore $2^D\xi$ can only belong to the support of $F_{N,t}$ for $N$ satisfying $\max(1,2^{D-1}) \leq N \leq \min(K,2^{D+1})$. (We will thus assume that $\max(1,2^{D-1}) \leq \min(K,2^{D+1})$ in what follows; in particular, $D\geq -1$.) On the other hand, if $2^D\xi$ belongs to the support of $F_{N,t}$ for some $N$ with $\max(1,2^{D-1}) \leq N \leq \min(K,2^{D+1})$, then $2^{D-1}-1/4<|2^D\xi|<2^{D+1}+3/4$, and so $\chi_{A_D}(2^D\xi)=1$. These observations yield
\begin{align}\label{E:decomposition}
&\chi_{A_D}(\xi) m_t(2^D\xi)
=\sum_{N=\max(1,2^{D-1})}^{\min(K,2^{D+1})}F_{N,t}(2^D\xi)\\
&=\sum_{N=\max(1,2^{D-1})}^{\min(K,2^{D+1})} \sum_{k\in \N^n:~ N2^N <|k|<(N+1/2)2^N} c_N a_{N,k}(t) \Psi(2^{N+D}\xi-k). \nonumber
\end{align}

Thanks to the support properties of $\Psi$, the functions $\Psi(2^{N+D}\xi -k)$ have pairwise disjoint supports in $N$ and $k$ (for the fixed $D$). Combining this with the fact that $|c_N|\leq 1$, we deduce that the function $|\chi_{A_D}(\xi) m_t(2^D\xi)|$ is pointwise bounded by $\sup_{\xi \in \R^n} |\Psi(\xi)|$, and therefore
\begin{equation}\label{E:L^r_estimate}
\|\chi_{A_D}(\xi) m_t(2^D\xi)\|_{L^r}\leq C(n,r,\Psi).
\end{equation}

Further, we denote $\Phi=(-\Delta)^{\frac{s}{2}} \Psi$ and observe that
\begin{align*}
&(-\Delta)^{\frac{s}{2}} [\chi_{A_D}(\cdot)m_t(2^D\cdot)](\xi)\\
&=\sum_{N=\max(1,2^{D-1})}^{\min(K,2^{D+1})} 2^{(N+D)s} \sum_{k\in \N^n:~ N2^N <|k|<(N+1/2)2^N} c_N a_{N,k}(t) \Phi(2^{N+D}\xi-k)\\
&=\sum_{N=\max(1,2^{D-1})}^{\min(K,2^{D+1})} 2^{Ds} N^{-s} \sum_{k\in \N^n:~ N2^N <|k|<(N+1/2)2^N} a_{N,k}(t) \Phi(2^{N+D}\xi-k).
\end{align*}
Let $\alpha>n+n/r$ be an integer. Since $\Phi$ is a Schwartz function, we have
$$
|\Phi(\xi)| \lesssim (1+|\xi|)^{-\alpha}.
$$
This yields
\begin{align*}
&|(-\Delta)^{\frac{s}{2}} [\chi_{A_D}(\cdot)m_t(2^D\cdot)](\xi)|\\
&\lesssim\sum_{N=\max(1,2^{D-1})}^{\min(K, 2^{D+1})} \sum_{k\in \N^n:~ N2^N <|k|<(N+1/2)2^N} |\Phi(2^{N+D}\xi-k)|\\
&\lesssim\sum_{N=\max(1,2^{D-1})}^{\min(K, 2^{D+1})} \sum_{k\in \N^n:~ N2^N <|k|<(N+1/2)2^N} (1+|2^{N+D}\xi-k|)^{-\alpha} \\
&\lesssim \sum_{N=\max(1,2^{D-1})}^{\min(K, 2^{D+1})} \int_{\{y\in \R^n:~ N2^N<|y|<(N+1/2)2^N\}} (1+|2^{N+D}\xi-y|)^{-\alpha}\,dy\\
&\approx\sum_{N=\max(1,2^{D-1})}^{\min(K, 2^{D+1})} \int_{\{z\in \R^n:~ N2^N<|z+2^{N+D}\xi|<(N+1/2)2^N\}} (1+|z|)^{-\alpha}\,dz.
\end{align*}

Let us now fix $N=\max(1,2^{D-1}), \dots,\min(K, 2^{D+1})$, and let $z\in \R^n$ satisfy $N2^N<|z+2^{N+D}\xi|<(N+1/2)2^N$. Then
$$
|z| \geq |z+2^{N+D}\xi|-2^{N+D}|\xi|>2^N(N-2^D|\xi|),
$$
and also
$$
|z|\geq 2^{N+D}|\xi| -|z+2^{N+D}\xi|>2^N(2^D|\xi|-N-\frac{1}{2}).
$$
Thus, $|z|>2^N$ if either $N\geq 2^D|\xi|+1$ or $N\leq 2^D|\xi|-3/2$. This means that, for a given $\xi$, inequality $|z|>2^N$ holds for all but three values of $N$. Consequently,
\begin{align}\label{E:boundedness}
&|(-\Delta)^{\frac{s}{2}} [\chi_{A_D}(\cdot)m_t(2^D\cdot)](\xi)|\\ 
\nonumber
&\lesssim 3\int_{\R^n} (1+|z|)^{-\alpha}\,dz
+\sum_{N=\max(1,2^{D-1})}^{\min(K, 2^{D+1})} \int_{\{z\in \R^n:~ |z|>2^N\}} (1+|z|)^{-\alpha}\,dz\\
\nonumber
&\lesssim C+\sum_{N=1}^\infty 2^{N(n-\alpha)} =C(n,\alpha,s,\Psi).
\end{align}

Assume now that $|\xi|\geq 6$. Since $N\leq 2^{D+1} \leq 2/3 \cdot 2^{D-1}|\xi|$, we have
$$
|z|>2^N(2^D|\xi|-N-\frac{1}{2})\geq 2^N(2^D|\xi|-3/2N) \geq 2^{N+D-1}|\xi|.
$$
Thus, recalling that $D\geq -1$, we obtain
\begin{align}\label{E:decay}
&|(-\Delta)^{\frac{s}{2}} [\chi_{A_D}(\cdot)m_t(2^D\cdot)](\xi)|\\
\nonumber
&\lesssim \sum_{N=\max(1,2^{D-1})}^{\min(K, 2^{D+1})} \int_{\{z\in \R^n:~ |z|>2^{N+D-1}|\xi|\}} (1+|z|)^{-\alpha}\,dz\\
\nonumber
&\lesssim \sum_{N=1}^\infty \int_{\{z\in \R^n:~ |z|>2^{N-2}|\xi|\}} (1+|z|)^{-\alpha}\,dz\\
\nonumber
&\lesssim |\xi|^{n-\alpha} \sum_{N=1}^\infty 2^{(N-2)(n-\alpha)} =C(n,\alpha,s,\Psi) |\xi|^{n-\alpha}.
\end{align}
A combination of estimates~\eqref{E:boundedness} and~\eqref{E:decay} yields
\begin{equation}\label{E:derivative_estimate}
\|(-\Delta)^{\frac{s}{2}} [\chi_{A_D}(\xi)m_t(2^D\xi)]\|_{L^r} \leq C(n,r,s,\Psi).
\end{equation}
Finally, combining estimates~\eqref{E:kato-ponce}, \eqref{E:L^r_estimate} and~\eqref{E:derivative_estimate}, we obtain
the desired conclusion.
\end{proof}

\begin{proof}[Proof of Theorem~\ref{T:main_theorem}]
We may assume, without loss of generality, that $p<2$; the result in the case $p>2$ will then follow by duality.

Let $t$, $K$ and $m_t$ be as described at the beginning of this section, and let $\varphi$ be a Schwartz function such that $\varphi(\xi)=1$ if $|\xi|\leq 2$. Define a function $f$ via its Fourier transform by $\wh{f}(\xi)=\varphi(\xi/K)$. Then $\wh{f}(\xi)=1$ if $|\xi|\leq 2K$. It follows from the proof of Lemma~\ref{L:lemma} that $m_t(\xi)$ is supported in the set $|\xi|<K+3/4\leq 2K$. Therefore, we have
$$
m_t(\xi)\wh{f}(\xi)=m_t(\xi),
$$
and so
\begin{align*}
&T_{m_t}f(x)\\
&=\sum_{N=1}^K c_N \sum_{k\in \N^n:~ N2^N<|k|<(N+1/2)2^N} a_{N,k}(t) 2^{-nN} (\mc F^{-1} \Psi)\left(\frac{x}{2^N}\right) e^{2\pi ix\cdot \frac{k}{2^N}}.
\end{align*}
By Fubini's theorem and Khintchine's inequality, we obtain
\begin{align*}
&\int_0^1 \|T_{m_t} f(x)\|_{L^p}^p\,dt
=\int_{\R^n} \int_0^1 |T_{m_t} f(x)|^p\,dt\,dx\\
&\approx \int_{\R^n} \left(\sum_{N=1}^K \sum_{k\in \N^n:~ N2^N<|k|<(N+1/2)2^N} c_N^2 2^{-2nN} \left|(\mc F^{-1} \Psi)\left(\frac{x}{2^N}\right)\right|^2\right)^{\frac{p}{2}}\,dx\\
&\approx \int_{\R^n} \left(\sum_{N=1}^K c_N^2 N^{n-1} 2^{-nN} \left|(\mc F^{-1} \Psi)\left(\frac{x}{2^N}\right)\right|^2\right)^{\frac{p}{2}}\,dx.
\end{align*}
Let $A>0$ be such that $\mc F^{-1} \Psi$ does not vanish in $\{y\in \R^n: A\leq |y|<2A\}$. Then
\begin{align*}
&\int_0^1 \|T_{m_t} f(x)\|_{L^p}^p\,dt\\
&\gtrsim \int_{\R^n} \left(\sum_{N=1}^K c_N^2 N^{n-1} 2^{-nN} \left|(\mc F^{-1} \Psi)\left(\frac{x}{2^N}\right)\right|^2 \chi_{\{x:~ A\leq \frac{|x|}{2^N} <2A\}}(x)\right)^{\frac{p}{2}}\,dx\\
&\approx \sum_{N=1}^K c_N^p N^{\frac{(n-1)p}{2}} 2^{-\frac{nNp}{2}} \int_{\{x:~ A\leq \frac{|x|}{2^N} <2A\}} \left|(\mc F^{-1} \Psi)\left(\frac{x}{2^N}\right)\right|^p\,dx\\
&\approx \sum_{N=1}^K c_N^p N^{\frac{(n-1)p}{2}} 2^{nN(1-\frac{p}{2})} \int_{\{y:~ A\leq |y| <2A\}}\left|(\mc F^{-1} \Psi)\left(y\right)\right|^p\,dy\\
&\approx \sum_{N=1}^K c_N^p N^{\frac{(n-1)p}{2}} 2^{nN(1-\frac{p}{2})}
\approx \sum_{N=1}^K N^{\frac{(n-1)p}{2}-sp} 2^{N(n-\frac{np}{2}-sp)}\\
&= \sum_{N=1}^K N^{np-n-\frac{p}{2}},
\end{align*}
where the last equality follows from~\eqref{E:assumption_s}. We observe that $np-n-p/2>-1$ as 
$$
p>1>\frac{n-1}{n-\frac{1}{2}}.
$$
Thus,
\begin{equation}\label{E:estimate1}
\int_0^1 \|T_{m_t} f(x)\|_{L^p}^p\,dt \gtrsim K^{np-n-\frac{p}{2}+1}.
\end{equation}

Let us now estimate the $L^p$-norm of $f$. Since $f(x)=K^n (\mc F^{-1} \varphi)(Kx)$, we obtain
\begin{align}\label{E:estimate2}
\|f\|_{L^p}^p 
&=K^{np} \int_{\R^n} |(\mc F^{-1} \varphi)(Kx)|^p\,dx\\
&=K^{np-n} \int_{\R^n} |(\mc F^{-1} \varphi)(y)|^p\,dy
\approx K^{np-n}. \nonumber
\end{align}

Assume that inequality~\eqref{E:not_true} is satisfied. Then, applying~\eqref{E:not_true} with $m=m_t$, integrating with respect to $t$ and using Lemma~\ref{L:lemma}, we get
$$
\int_0^1 \|T_{m_t} f(x)\|_{L^p}^p\,dt \leq C\|f\|_{L^p}^p,
$$
which implies, via~\eqref{E:estimate1} and~\eqref{E:estimate2}, that
$$
K^{np-n-\frac{p}{2}+1}\leq C K^{np-n},
$$
or, equivalently,
\begin{equation}\label{E:contradiction}
K^{1-\frac{p}{2}}\leq C.
\end{equation}
As $p<2$, we have $\lim_{K\to \infty} K^{1-\frac{p}{2}}=\infty$, which contradicts~\eqref{E:contradiction}. The proof is complete. 
\end{proof}

\section*{Acknowledgments}

The author is grateful to Andreas Seeger for useful discussions and to Loukas Grafakos for careful reading of this paper and valuable comments.


\end{document}